  \theoremstyle{definition}  
   \newtheorem{defn}{Definition}[section]
  \theoremstyle{plain}  
   \newtheorem{thm}[defn]{Theorem}
   \newtheorem{prop}[defn]{Proposition}
   \newtheorem{cor}[defn]{Corollary}
  \theoremstyle{remark} 
  \numberwithin{equation}{section}
  \setlist[enumerate]{font=\upshape} 
  \renewcommand{\bf}[1]{\textbf{#1}}
  \renewcommand{\sc}[1]{\textsc{#1}}
  \newcommand{\mcl}[1]{\mathcal{#1}}
\begin{document}
  \fontsize{12}{14}
  \selectfont
  \title{\LARGE Nilpotent Completely Positive Maps}
  \author{B.V. Rajarama Bhat and Nirupama Mallick}

\maketitle

\date{ }

 \begin{center}
\bf {Abstract\footnote {\bf{Keywords:} Completely positive maps,
nilpotent, partition, majorization. \bf{AMS Classification:} 46L57,
15A45}}


\end{center}
We study the structure of nilpotent completely positive maps in
terms of Choi-Kraus coefficients. We prove several inequalities,
including certain majorization type inequalities for dimensions of
kernels of powers of nilpotent completely positive maps.



\section{Introduction}

Completely positive (CP) maps on $C^*$-algebras (\cite{wfs}) are
well-studied objects due to their importance in various contexts. In
operator algebra theory they are a major tool to understand
$C^*$-algebras. For instance nuclearity for $C^*$-algebras can be
defined using CP maps( see \cite{BO, Paulsen} ). In quantum
probability they replace Markov maps of classical probability
(\cite{Par} ). In quantum information theory they are quantum
channels(\cite{Hol}). However, in most of these studies usually one
looks at unital and some times trace preserving completely positive
maps. In the present article we initiate a study of nilpotent CP
maps. The reasons for this are two fold. Firstly, in basic linear
algebra nilpotent linear maps form an important class. They are
quite distinct from well behaved hermitian or normal maps for which
we have the powerful spectral theorem. However Jordan decomposition
gives us a good enough tool to study nilpotent maps. It is important
to study them as they appear in representation theory, Lie algebra
theory and in various other contexts (\cite{Ful}).  Secondly we came
across nilpotent CP maps when we were looking at some special
classes of unital CP maps called roots of states (\cite{Bh}). In
other words an analysis of  nilpotent CP maps is beneficial to
understand some unital CP maps. This connection would be discussed
in the last section of this article.

To begin with let us  recall some basic theory of nilpotent linear
maps on finite dimensional vector spaces. Suppose $V$ is a finite
dimensional complex vector space of dimension $n$ and suppose
$L:V\to V$ is a  nilpotent linear map. Then $p\geq 1$ is called the
order of nilpotency of $L$ if $L^p=0$ and $L^{p-1}\neq 0.$  Suppose
$L$ is nilpotent of order $p.$ Take $V_i=\mbox{ker} (L^i)$ for
$1\leq i\leq p.$ Clearly $\{0\}\subset V_1\subset V_2 \cdots \subset
V_p=V$. Suppose $l_i=\mbox{dim} V_i/V_{i-1} $, $1\leq i\leq p$ (with
$V_0=\{0\}$). Then $l_i\geq 1$ for every $i$ and $l_1+l_2+\cdots
+l_p=n$.  From the observation that $L$ induces an injective map
from $V_i/V_{i-1}$ to $V_{i-1}/V_{i-2}$, we get,  $l_1\geq l_2\geq
\cdots \geq l_p.$  In other words, $(l_1, l_2, \ldots , l_p)$ is a
partition of the natural number $n$. We call $(l_1, l_2, \ldots ,
l_p)$ as the {\em nilpotent type} of $L$. It is an important
invariant for $L$. Another way of arriving at the nilpotent type is
as follows. Suppose $(m_1, m_2, \ldots , m_k)$ are sizes of the
Jordan blocks of $L$ arranged in decreasing order. Then $(l_1, l_2,
\ldots , l_p)$ is just the dual of $(m_1, m_2, \ldots , m_k)$ in the
sense of partitions, that is,: $l_i=\#\{ j: m_j\geq i \}.$

Suppose now $M\subset V$ is an invariant subspace of $L$ and $N=V/M
.$  Take $R=L|_M$ and let $S$ be the map induced by $L$ on
quotienting in $N=V/M$. It is easy to see that $R,S$ are nilpotent
of order at most $p$. Suppose $R, S$ are of nilpotent type $(r_1,
r_2, \ldots r_p),$ $(s_1, s_2, \ldots , s_p)$ respectively (Here,
the tuples are extended by 0's if the nilpotent orders are less than
$p$). Then there are interesting inequalities connecting the three
triples, coming from the so called Littlewood-Richardson rules (see
\cite{Ap,  BLS, CF, LM}  ). For instance, we get `majorization'
inequalities: $l_1+l_2+ \ldots +l_k\leq (r_1+r_2+\cdots +r_k)+
(s_1+s_2+\cdots +s_k)$ for all $1\leq k\leq p$ and $l_1+l_2+\cdots
+l_p= (r_1+r_2+\cdots +r_p)+(s_1+s_2+ \cdots +s_p).$ The main goal
of this article is to extend some of these results to nilpotent
completely positive maps.

Now suppose $H$ is a finite dimensional Hilbert space with
dim$(H)=n$ and let $\mathcal {B}(H)$ be the algebra of all linear
maps on $H$. A linear map $\alpha : \mathcal {B}(H)\to \mathcal
{B}(H)$ is said to be completely positive if $\sum Y_i^*\alpha
(X_i^*X_j)Y_j \geq 0$ for all choices of $X_1, X_2, \ldots , X_k,
Y_1, Y_2, \ldots , Y_k$ in $\mathcal {B}(H)$ with $k\geq 1.$ Suppose
$\alpha $ is nilpotent. As dim$(\mathcal {B}(H)= n^2)$ we see
immediately that the order of nilpotency of $\alpha $ is at most
$n^2$. But actually as we see in Corollary 2.5, this order can't be
bigger than $n$. Suppose the order of nilpotency of $\alpha $ is
$p.$ In the next section we assign a tuple $(a_1, a_2, \ldots ,
a_p)$ called the CP nilpotent type of $\alpha $. Here $a_1, a_2,
\ldots , a_p$ are natural number such that $a_1+a_2+\cdots +a_p =n$,
but unlike the previous case, $a_j$'s need not be in the decreasing
order. Nevertheless in Section 3, we do obtain a majorization result
for CP nilpotent types similar to the one stated above. It is not
yet clear as to what other inequalities hold.

In the last section we recall the definition of roots of states
which are special kinds of unital completely positive maps,
appearing in dilation theory and quantum stochastics. We see as to
how they always contain a nilpotent CP map as a substructure. A
result in the converse direction is also possible. Though some of
the theory can be extended easily to the infinite dimensional case,
we do not do it here, as the main focus of this article is on some
inequalities coming from dimension counting. For this reason we
avoid all technicalities and restrict ourselves entirely to the
finite dimensional setting.

\section{Basics of nilpotent CP maps}

Let $H$ be a finite dimensional Hilbert space. Let $\alpha :\mcl
{B}(H)\to \mcl {B}(H)$ be a  completely positive map. Then it has a
Choi-Kraus (\cite{Ch, Kr} ) decomposition:
\begin{equation}
\alpha (X)= \sum _{i=1}^dL_i^*XL_i ~~\forall X\in \mcl{B}(H)
\end{equation}
for some linear maps $L_1, L_2, \ldots , L_d$ in $\mcl{B}(H)$ for
some $d\geq    1.$ The decomposition is not unique. However, if
\begin{equation*}
\alpha (X)= \sum _{j=1}^{d'}M_j^*XM_j~~ \forall X\in \mcl{B}(H)
\end{equation*}
%
then $\mcl{L}_{\alpha } := \mbox{span} \{ L_i : 1\leq i\leq d\}
=\mbox{span}\{ M_j: 1\leq j\leq d' \} .$ Following Arveson we call
$\mcl{L}_{\alpha } $ as the metric operator space of $\alpha .$ The
linear maps $L_i$ in $(2.1)$ are called Choi-Kraus coefficients.  It
is possible to choose them to be linearly independent. The number of
linearly independent Choi-Kraus coefficients '$d$' (which is also
the dimension of $\mcl{L}_{\alpha }$) is called the index of $\alpha
.$

Now onwards $\alpha :\mcl{B}(H)\to \mcl{B}(H)$ is a non-zero
completely positive map with Choi-Kraus decomposition as in (2.1).

 \begin{prop}
  Let $\alpha : \mcl{B}(H)\longrightarrow \mcl{B}(H)$ be a non-zero completely positive map with a Choi-Kraus
  decomposition $\alpha(X) = \sum_{i=1}^d L_i^*XL_i \quad\forall \;X \in \mcl{B}(H)$.  Then

        (i) $\mbox{ker}(\alpha(1))=\cap_{i=1}^d \mbox{ker}({L_i})$

       (ii) For $X \geq 0,\alpha(X)=0$  iff $\mbox{ran}(X)\subseteq \cap_{i=1}^d \mbox{ker}(L_i^*) $

       (iii) $\{ x\in H :\alpha(|x\rangle \langle x|) =0\} =\cap_{i=1}^d \mbox{ker}({L_i^*})$

        (iv)  $\mbox{ran}(\alpha(1))=\overline{span}\{L_i^*u: u\in H, 1\leq i \leq d\}$
 \end{prop}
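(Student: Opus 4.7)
The proof is essentially a routine exercise using the Choi--Kraus decomposition together with positivity. I would handle the four parts in the given order, using each to feed the next.

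First, for (i), I would write out $\alpha(\I) = \sum_{i=1}^d L_i^*L_i$ and, for any $x\in H$, compute
\[
\bip{\alpha(\I)x,x} = \sum_{i=1}^d \|L_ix\|^2.
\]
Since $\alpha(\I)\geq 0$, the vector $x$ lies in $\ker(\alpha(\I))$ iff $\ip{\alpha(\I)x,x}=0$, and by the displayed formula this happens iff $L_ix = 0$ for every $i$. This gives $\ker(\alpha(\I)) = \cap_{i=1}^d \ker(L_i)$.

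For (ii), I would reduce to the rank-one case via the spectral theorem. If $X\geq 0$ has spectral decomposition $X = \sum_k \lambda_k \ranko{e_k}{e_k}$ with $\lambda_k>0$, then a direct computation shows
\[
\alpha(X) = \sum_k \lambda_k \sum_{i=1}^d \ranko{L_i^*e_k}{L_i^*e_k},
\]
which is a sum of positive rank-one operators. Hence $\alpha(X)=0$ iff $L_i^*e_k = 0$ for all $i,k$, which is the condition that every $e_k$ (and hence $\mathrm{ran}(X) = \mathrm{span}\{e_k\}$) lies in $\cap_i\ker(L_i^*)$. Part (iii) is then the immediate special case $X = \ranko{x}{x}$.

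Finally, for (iv), I would use that $\alpha(\I)$ is self-adjoint, so $\mathrm{ran}(\alpha(\I)) = \ker(\alpha(\I))^\perp$. Applying (i) and then standard linear algebra ($(\ker L_i)^\perp = \mathrm{ran}(L_i^*)$ in finite dimensions),
\[
\mathrm{ran}(\alpha(\I)) = \Bigl(\bigcap_{i=1}^d \ker(L_i)\Bigr)^{\!\perp} = \ol{\lspan}\{L_i^*u : u\in H,\ 1\le i\le d\}.
\]
Since everything takes place in a finite-dimensional Hilbert space, no genuine closure issues arise, so there is no real obstacle; the only mild point to be careful about is writing the spectral expansion in (ii) so that the vanishing of a positive sum of rank-one operators forces each summand to vanish.
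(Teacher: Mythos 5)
Your proof is correct and follows essentially the same route as the paper: part (i) is the identical computation $\langle x,\alpha(1)x\rangle=\sum_i\|L_ix\|^2$, part (iii) is the same specialization of (ii), and the only cosmetic differences are that in (ii) you diagonalize $X$ and kill the positive rank-one summands $\lambda_k\,|L_i^*e_k\rangle\langle L_i^*e_k|$ one by one where the paper factors through $X^{1/2}$ (both devices extract $L_i^*X=0$ from $\alpha(X)=0$), and in (iv) you use $\mathrm{ran}(\alpha(1))=\ker(\alpha(1))^{\perp}$ together with $\bigl(\bigcap_i\ker L_i\bigr)^{\perp}=\mathrm{span}\{L_i^*u : u\in H\}$ rather than verifying the two inclusions directly. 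These variants are equally valid in the finite-dimensional setting, so there is no gap.
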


\begin{proof} (i) We have,
\begin{align*}
   \mbox{ker}(\alpha(1))& =\{x\in H:\alpha(1)x=0\}\\
                                    &=\{x\in H:\sum_{i=1}^d L_i^*L_ix=0 \}\\
                                    &=\{x\in H:\sum_{i=1}^d \langle x, (L_i^*L_i)x\rangle =0\}\\
                                    &=\cap_{i=1}^d \mbox{ker}(L_i).
            \end{align*}

(ii)  Assume that $\alpha(X)=0$ with $X\geq 0$. Then $\sum_{i=1}^d
L_i^*XL_i=0$ and then by positivity of $X$,  $L_i^* XL_i=0$ for all
$i.$ Therefore $L_i^*X^{\frac{1}{2}}(X^{\frac{1}{2}}L_i)=0$, which
implies $L^*X^{\frac{1}{2}}=0$ and $L^*X=0$. So we get
$\mbox{ran}(X)\subseteq \bigcap _{i=1}^d\mbox{ker}(L_i^*).$

     Conversely assume that \mbox{ran}$(X)\subseteq\cap_{i=1}^d\mbox{ker}({L_i^*})$. Then $L_i^* X=0$ for all $1\leq i \leq d$. Hence  $\alpha(X)=\sum_{i=1}^d L_i^*XL_i =0$.

(iii) Since for $x\in H$, ran$(|x\rangle \langle x|)= \mathbb{C}x$
this result is clear from part (ii).


(iv)     Let $ x\in \mbox{ran} (\alpha(1))$. Then
$x=\sum_{i=1}^dL_i^*L_iy=\sum_{i=1}^dL_i^*(L_iy)$ for some $y\in H$.
So $x\in\overline{\textrm{span}}\{L_i^*u: u\in H, 1\leq i\leq d\}$.
Thus $\mbox{ran} (\alpha(1))\subseteq
\overline{\textrm{span}}\{L_i^*u: u\in H, 1\leq i\leq d\} $. For
reverse inclusion let $ x\in\overline{\textrm{span}}\{L_i^*u:u\in H,
1 \leq i\leq d\}$. Then $ x=\sum_{i=1}^d \lambda_iL_i^*u_i $ where
$u_i \in H, \lambda_i\in\mathbb{C}$. To show $ x\in \textrm{ran
}\alpha(1)=\textrm{ran}(\sum L_i^*L_i)$. It is enough to show  $x\in
\textrm{ker}(\sum L_i^*L_i)^\perp$ i.e., $\langle x,y\rangle=0$ for
all $y\in \textrm{ker}(\sum L_i^*L_i)$.  Suppose $y\in
\textrm{ker}(\sum L_i^*L_i)$.  Now $0= \langle y, \sum
_iL_i^*L_iy\rangle= \sum _i \langle L_iy, L_iy\rangle $ implies that
$L_iy=0$ for all $i$. So
     \begin{equation*}
      \langle y,x\rangle= \langle y,\sum _i\lambda_iL_i^*u_i \rangle
                        =  \sum _i\lambda_i\langle y, L_i^*u_i \rangle
                        =\sum _i\lambda_i\langle L_iy, u_i \rangle
                        =0.
    \end{equation*}
 \end{proof}

We remark that (i), (iv) of last Proposition could have been stated
as $\mbox{ker} (\alpha (1))= \bigcap \{ \mbox{ker}L : L\in \mathcal
{L}_{\alpha }$ and $\mbox{ran} (\alpha (1))=\overline
{\mbox{span}}\{ L^*u: L\in \mathcal{L} _{\alpha }, u\in H\}.$ This
way, we can make statements free of the choice of the  Choi-Kraus
representation. However we do not opt to do so.

Recall that, a linear map $A $ is said to be nilpotent of order $p$
if $A^p=0$ and $p$ is the smallest such number. We wish to look at
Choi-Kraus decompositions of nilpotent completely positive maps.
\begin{prop}
  Let $\alpha : \mcl{B}(H)\to \mcl{B}(H)$ be a non-zero completely positive map with a Choi-Kraus
  decomposition $\alpha(X) = \sum_{i=1}^d L_i^*XL_i \quad\forall \;X \in
  \mcl{B}(H).$ Then the following are equivalent.

  (i) $\alpha ^p(X)= 0 ~~~\forall X\in \mcl{B}(H);$

(ii) $L_{i_1}L_{i_2}\ldots L_{i_p}=0$ for all $i_1, i_2, \ldots
,i_p.$
\end{prop}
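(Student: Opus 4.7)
The plan is to iterate the Choi-Kraus decomposition and then reduce the statement to Proposition~2.1(i).

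First, by a direct substitution,
\begin{equation*}
  \alpha^2(X) = \sum_{i,j} L_j^* L_i^* X L_i L_j = \sum_{i,j} (L_i L_j)^* X (L_i L_j),
\end{equation*}
and an easy induction on $p$ yields
\begin{equation*}
  \alpha^p(X) \;=\; \sum_{i_1,\ldots,i_p} (L_{i_1} L_{i_2} \cdots L_{i_p})^* \, X \, (L_{i_1} L_{i_2} \cdots L_{i_p}).
\end{equation*}
Thus $\alpha^p$ is itself a completely positive map, and the $p$-fold products $L_{i_1}\cdots L_{i_p}$ furnish a Choi-Kraus style representation of $\alpha^p$ (possibly with repetitions and not linearly independent, but the proof of Proposition~2.1(i) only uses a sum-of-squares expression and never requires linear independence).

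The implication (ii)$\Rightarrow$(i) is then immediate from this formula: if every $p$-fold product vanishes, every summand in the expression for $\alpha^p(X)$ vanishes, so $\alpha^p = 0$.

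For (i)$\Rightarrow$(ii), I would specialize to $X = 1$: if $\alpha^p = 0$ then in particular $\alpha^p(1) = 0$, so $\ker(\alpha^p(1)) = H$. Applying Proposition~2.1(i) to the CP map $\alpha^p$ with the Choi-Kraus coefficients $\{L_{i_1}\cdots L_{i_p}\}$ obtained above gives
\begin{equation*}
  H \;=\; \ker(\alpha^p(1)) \;=\; \bigcap_{i_1,\ldots,i_p} \ker(L_{i_1} L_{i_2} \cdots L_{i_p}),
\end{equation*}
which forces $L_{i_1} L_{i_2} \cdots L_{i_p} = 0$ for every tuple $(i_1,\ldots,i_p)$. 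There is no real obstacle here; the only point that needs a brief remark is that Proposition~2.1(i) remains valid for the non-canonical representation of $\alpha^p$, since its proof merely expands $\sum_i \langle x, L_i^* L_i x\rangle = \sum_i \|L_i x\|^2$ and concludes termwise.
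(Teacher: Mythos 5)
Your argument is correct and is essentially the paper's own proof: the paper likewise expands $\alpha^p(X)=\sum_{i_1,\dots,i_p} L_{i_p}^*\cdots L_{i_1}^* X L_{i_1}\cdots L_{i_p}$ and then invokes Proposition~2.1 with the CP map $\alpha^p$ in place of $\alpha$. Your closing remark about linear independence is harmless but unnecessary, since Proposition~2.1 is stated for an arbitrary Choi--Kraus decomposition and never assumes the coefficients are linearly independent.
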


\begin{proof}
Clearly,
  \begin {equation*}
       \alpha^p(X)=\sum_{i_1,i_2\dots,i_p=1}^d L_{i_p}^*L_{i_{p-1}}^*\dots L_{i_1}^*XL_{i_1}L_{i_2}\dots L_{i_p}=0\quad\forall\;X\in\mcl{B}(H)
  \end{equation*}
Now the result follows easily from the previous Proposition, by
considering the CP map $\alpha ^p$ in place of $\alpha $.
\end{proof}

  Let $\alpha : \mcl{B}(H)\to \mcl{B}(H)$ be as above
  and suppose that $\alpha $ is nilpotent of order $p.$ Take
  $H_1:=\textrm{ker}(\alpha(1))$ and for $2\leq k\leq p,$
\begin{equation}\label{Hk}
 H_k:= \textrm{ker}(\alpha ^k(1))\bigcap (\textrm{ker}(\alpha
^{k-1}(1)))^{\perp}.
\end{equation}
Then clearly, $H=H_1\oplus H_2 \oplus \cdots \oplus H_p.$ Observe
that, $L_k(H_{i+1})\subseteq H_1\oplus H_2\oplus \cdots \oplus H_i$
for all $i$. In other words, Choi-Kraus coefficients have a block
strictly upper triangular decomposition. \begin{defn} Let
$a_i=\textrm{dim}(H_i)$ for $1\leq i\leq p.$ Then  $(a_1, a_2,
\ldots, a_p)$ is called the {\em CP nilpotent type} of $\alpha .$
\end{defn}

Note that $a_1+a_2+ \cdots +a_p=$ dim $H.$ From the usual theory of
nilpotent linear maps one may expect $a_{i+1}\leq a_{i}$ for all
$i$. However, it is easily seen that this is not true. The
appropriate statement is the following.

\begin{thm} \label{basicinequality} {\bf (Basic inequalities)}
With notation as above, \begin{equation} a_{i+1}\leq d.a_i ~~\forall
1\leq i\leq (p-1).\end{equation}
 Conversely, given a tuple $(a_1, a_2, \ldots , a_p)$ of natural numbers, adding up to dim $(H)$, and satisfying
 (2.3), there exists a nilpotent completely positive map $\alpha
 :\mcl{B}(H)\to \mcl{B}(H)$ of order $p$ and CP type $(a_1, a_2, \ldots
 , a_p).$
\end{thm}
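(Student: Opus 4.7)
The strategy for the forward inequality is to exhibit an injective linear map from $H_{i+1}$ into a direct sum of $d$ copies of $H_i$; since that codomain has dimension $d\,a_i$, injectivity will immediately force $a_{i+1} \le d\,a_i$. The candidate is $T\colon H_{i+1}\to \bigoplus_{k=1}^d H_i$ defined by $T(x) := (Q_i L_1 x, Q_i L_2 x, \ldots, Q_i L_d x)$, where $Q_i$ is the orthogonal projection onto $H_i$. That each $L_k x$ indeed sits in $H_1\oplus\cdots\oplus H_i$, so that $T$ really lands in $\bigoplus_k H_i$, is precisely the block upper triangularity noted before Definition~2.3: for $x \in H_{i+1} \subseteq \ker(\alpha^{i+1}(1))$, Proposition~2.1(i) applied to the CP map $\alpha^i$ (whose Choi--Kraus coefficients are the length-$i$ products of the $L_j$) gives $L_k x \in \ker(\alpha^i(1)) = H_1 \oplus \cdots \oplus H_i$.

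The injectivity of $T$ is where complete positivity does real work, and I expect this to be the main nontrivial step. Suppose $T(x) = 0$ for some $x \in H_{i+1}$. Then each $L_k x$ actually lies in $H_1 \oplus \cdots \oplus H_{i-1} = \ker(\alpha^{i-1}(1))$, so $\alpha^{i-1}(1)\,L_k x = 0$ for every $k$. Writing $\alpha^i(1) = \alpha(\alpha^{i-1}(1)) = \sum_{k=1}^d L_k^*\,\alpha^{i-1}(1)\,L_k$, we conclude $\alpha^i(1)\,x = 0$, i.e.\ $x \in \ker(\alpha^i(1))$. Since $x \in H_{i+1} \subseteq \ker(\alpha^i(1))^\perp$ by the very definition of $H_{i+1}$, this forces $x = 0$, and the inequality (2.3) follows.

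For the converse, decompose $H = H_1 \oplus \cdots \oplus H_p$ with $\dim H_j = a_j$ and fix orthonormal bases $\{e^{(j)}_1, \ldots, e^{(j)}_{a_j}\}$. Since $a_j \le d\,a_{j-1}$ for $j \ge 2$, fix an injection $\phi_j\colon \{1,\ldots,a_j\} \hookrightarrow \{1,\ldots,d\} \times \{1,\ldots,a_{j-1}\}$, written $\phi_j(m) = (k_j(m), l_j(m))$. Define
\[
 L_k\,e^{(j)}_m := \begin{cases} e^{(j-1)}_{l_j(m)} & \text{if } j \ge 2 \text{ and } k = k_j(m), \\ 0 & \text{otherwise,} \end{cases}
\]
and set $\alpha(X) := \sum_{k=1}^d L_k^* X L_k$, which is automatically CP. Each $L_k$ carries $H_j$ into $H_{j-1}$ and annihilates $H_1$, so any length-$p$ product of the $L_k$'s kills $H$, giving $\alpha^p = 0$. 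The injectivity of $\phi_j$ makes the map $x \mapsto (L_1 x, \ldots, L_d x)$ injective on each $H_j$ with $j \ge 2$, so iterating one can produce a length-$(p-1)$ product which is nonzero on some vector of $H_p$, showing the order of $\alpha$ is exactly $p$. The same injectivity, combined with the observation that $L_{i_1}\cdots L_{i_j}$ sends $H_m$ cleanly into $H_{m-j}$ so that contributions from distinct levels cannot cancel, yields $\ker(\alpha^j(1)) = H_1 \oplus \cdots \oplus H_j$ for every $j$, confirming the CP nilpotent type $(a_1, \ldots, a_p)$. The converse is thus a bookkeeping exercise whose only ingredient is the inequality $a_{j+1} \le d\,a_j$, which supplies exactly enough ``slots'' for each injection $\phi_j$ to exist.
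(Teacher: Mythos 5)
Your proposal is correct and follows essentially the same route as the paper: the inequality comes from showing that $x\mapsto (P_iL_1x,\ldots,P_iL_dx)$ is injective from $H_{i+1}$ into $H_i\otimes\mathbb{C}^d$ (your finish via $\alpha^i(1)=\sum_k L_k^*\alpha^{i-1}(1)L_k$ is just a repackaging of the paper's argument with products of Choi--Kraus coefficients), and the converse builds $d$ coefficients sending each level $H_{j+1}$ to $H_j$ with jointly trivial kernel. Your explicit bases and injections $\phi_j$ simply flesh out what the paper dismisses as ``simple dimension counting,'' so there is no substantive difference.
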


\begin{proof}
 Since $\alpha$ is nilpotent of order $p$ we have $H=H_1\oplus H_2 \oplus \cdots\oplus H_p$, where $ H_k$ is given by $\eqref{Hk}$. Define $L:H\rightarrow H\otimes\mathbb{C}^d $ by
 $L(x)=\begin{pmatrix}
           L_1(x),L_2(x),\dots,L_d(x)
        \end{pmatrix}^t$. Where $'t'$ denotes the transpose of matrix.
 Note that $ L(H_{i+1})\subseteq H_{1}\oplus H_{2}\oplus\dots\oplus H_{i})\otimes\mathbb{C}^d$ for all
 $i=1,2,\dots, (p-1)$. Let  $ P_{i}$ be the  projection from $H$
 onto $H_{i}$. If we show that
  $(P_{i}\otimes I)\circ L:H_{i+1}\rightarrow H_{i}\otimes\mathbb{C}^d$ is one to one, then we have $ a_{i+1}\leq d.a_{i}$. Suppose $((P_{i}\otimes I)\circ L)(x)=0$ for $x\in H_{i+1}$.
  Then $L_k(x)\in\ker(P_{i})=H_{1}\oplus H_{2}\oplus\dots\oplus H_{i-1}=\cap\ker(L_{j_1}L_{j_2}\dots L_{j_{i-1}})$ for all $ k=1,2,\dots,d$. So
  $x\in \cap\ker(L_{j_1}L_{j_2}\dots L_{j_{i-1}}L_{j_i}) =H_{1}\oplus H_{2}\dots\oplus H_{i}$. As $x\in (H_{1}\oplus
  H_{2}\dots\oplus  H_{i})\bigcap H_{i+1}$, $x=0.$

For the converse part, arbitrarily decompose $H$ as $H=\oplus
_{i=1}^pH_i$ with dim $H_i=a_i$. Fix $i, 1\leq i<p.$ We have
$a_{i+1}\leq d.a_i$. Then by simple dimension counting it is easy to
construct $d$ linear maps $M_1, M_2, \ldots , M_d$ from $H_i$ to
$H_{i+1}$ such that span$\bigcup _k(M_k)(H_i)= H_{i+1}.$ Take
$L_k|_{H_{i+1}}=M_k^*, 1\leq k\leq d.$ Then it is clear that
$\bigcap _k\mbox{ker}(L_k)|_{H_{i+1}}=\{0\}.$ It follows that
$\alpha $ defined by $\alpha (X) =\sum _kL_k^*XL_k$ on $\mathcal
{B}(H)$ has required properties.

\end{proof}

\begin{cor}
Let $\alpha : \mcl{B}(H)\to \mcl{B}(H)$ be a completely positive map
which is nilpotent of order $p.$ Then $p\leq $ dim $(H).$
\end{cor}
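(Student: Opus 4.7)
The plan is to show that the CP nilpotent type $(a_1, a_2, \ldots, a_p)$ consists of $p$ strictly positive integers summing to $\dim(H)$, whence $p \leq \dim(H)$ is immediate. The only real content is to verify that each $a_i = \dim H_i \geq 1$, equivalently that the chain
\[
\{0\} = \ker(\alpha^0(1)) \subseteq \ker(\alpha(1)) \subseteq \ker(\alpha^2(1)) \subseteq \cdots \subseteq \ker(\alpha^p(1)) = H
\]
ascends strictly at every step.

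To establish the monotonicity $\ker(\alpha^k(1)) \subseteq \ker(\alpha^{k+1}(1))$, I would apply Proposition~2.1(i) to the CP map $\alpha^k$, whose Kraus coefficients are the products $L_{j_1} L_{j_2} \cdots L_{j_k}$ (as recorded in the proof of Proposition~2.2). Then $\ker(\alpha^k(1)) = \bigcap \ker(L_{j_1} \cdots L_{j_k})$, and for $x$ in this intersection any $(k+1)$-fold product satisfies $L_{j_0} L_{j_1} \cdots L_{j_k} x = L_{j_0}(L_{j_1} \cdots L_{j_k} x) = L_{j_0}(0) = 0$, so $x \in \ker(\alpha^{k+1}(1))$.

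For strictness, first observe that $\ker(\alpha^p(1)) = H$ since $\alpha^p = 0$, while $\ker(\alpha^{p-1}(1)) \neq H$: a CP map $\beta$ with $\beta(1) = 0$ has all its Kraus operators equal to zero (by Proposition~2.1(i)), hence is identically zero, so $\alpha^{p-1}(1) \neq 0$. Thus the topmost inclusion is strict. I would then argue by contradiction that every inclusion is strict, by propagating equality upward. Suppose $\ker(\alpha^i(1)) = \ker(\alpha^{i-1}(1))$ for some $1 \leq i < p$. Given any $x \in \ker(\alpha^{i+1}(1))$, one has $L_{k_1} \cdots L_{k_i} L_j x = 0$ for all tuples $(k_1, \ldots, k_i, j)$, so $L_j x \in \ker(\alpha^i(1)) = \ker(\alpha^{i-1}(1))$ for every $j$; unpacking this, $L_{m_1} \cdots L_{m_{i-1}} L_j x = 0$ for every $(m_1, \ldots, m_{i-1}, j)$, which is exactly $x \in \ker(\alpha^i(1))$. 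Iterating, the chain stabilizes at $\ker(\alpha^{i-1}(1))$ and never reaches $H$, contradicting $\ker(\alpha^p(1)) = H$.

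Consequently every $H_i$ is at least one-dimensional, and $\dim H = \sum_{i=1}^p a_i \geq p$. The main obstacle is the propagation-of-equality step; without it one cannot rule out intermediate stalling of the kernel chain, and the conclusion from strictness at the top alone would be far weaker.
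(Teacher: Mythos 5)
Your proof is correct, but it reaches the key fact (every $a_i\geq 1$, hence $p\leq\sum_i a_i=\dim H$) by a different route than the paper. The paper's proof is a one-liner on top of Theorem \ref{basicinequality}: it quotes the basic inequality $a_{i+1}\leq d\,a_i$ to see that $a_i=0$ forces $a_j=0$ for all $j\geq i$, and then (implicitly) that this is incompatible with the order of nilpotency being exactly $p$. You instead bypass inequality (2.3) entirely and argue directly with the kernel chain $\ker(\alpha^k(1))=\bigcap\ker(L_{i_1}\cdots L_{i_k})$, proving monotonicity and a propagation-of-equality statement (if the chain stalls once, it stalls forever) straight from Propositions 2.1(i) and 2.2, and then deriving the contradiction from $\alpha^{p-1}\neq 0$ via the observation that a CP map $\beta$ with $\beta(1)=0$ is identically zero. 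The two arguments are morally the same phenomenon (a zero jump propagates forward), but yours is self-contained — it does not need Theorem \ref{basicinequality} at all — and it makes fully explicit the step the paper compresses into ``clearly yields the result,'' namely that stalling would force $\alpha^{p-1}(1)=0$ and hence $\alpha^{p-1}=0$, contradicting the order; the price is that it is longer than the paper's appeal to the already-proved inequality.
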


\begin{proof}
If for some $i$, $a_i=0$, then from the inequality (2.3), $a_j=0$
for all $j\geq i.$ Then dim $(H)= a_1+a_2+\cdots +a_p$, clearly
yields the result.
\end{proof}

For the CP map $\alpha (X) =\sum L_i^*XL_i$ we define the conjugate
map $\alpha ^*: \mcl{B}(H)\to \mcl{B}(H)$ by $\alpha ^*(\cdot )=
\sum _iL_i(\cdot )L_i^*.$ It is not hard to see that:
$$ \mbox{trace} [\alpha (X)^*Y]= \mbox{trace} [X^*\alpha ^*(Y)]$$
for all $X, Y$ in $\mcl{B}(H)$. This justifies the notation $*$ and
also shows that the conjugate is independent of the choice of the
Choi-Kraus decomposition.

\begin{prop}
Suppose $\alpha , \alpha ^*$  are as above. Then $\alpha $ is
nilpotent of order $p$ if and only if $\alpha ^*$ is nilpotent of
order $p.$
\end{prop}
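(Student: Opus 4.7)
The plan is to reduce this to Proposition 2.2 by identifying a Choi-Kraus decomposition of $\alpha^*$ and then observing that the resulting vanishing condition on products is symmetric under taking adjoints.

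First I would note that if $\alpha(X) = \sum_{i=1}^d L_i^* X L_i$, then $\alpha^*(Y) = \sum_{i=1}^d L_i Y L_i^* = \sum_{i=1}^d (L_i^*)^* Y L_i^*$, so $\alpha^*$ is a completely positive map with Choi-Kraus coefficients $L_1^*, L_2^*, \ldots, L_d^*$. In particular, Proposition 2.2 applies to $\alpha^*$ in the following form: $(\alpha^*)^p = 0$ on $\mcl{B}(H)$ if and only if $L_{i_1}^* L_{i_2}^* \cdots L_{i_p}^* = 0$ for all choices of $i_1, i_2, \ldots, i_p$.

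The key algebraic observation is then
\begin{equation*}
  L_{i_1}^* L_{i_2}^* \cdots L_{i_p}^* = \bigl(L_{i_p} L_{i_{p-1}} \cdots L_{i_1}\bigr)^*,
\end{equation*}
so this product vanishes for all index tuples $(i_1, \ldots, i_p)$ if and only if $L_{j_1} L_{j_2} \cdots L_{j_p} = 0$ for all index tuples $(j_1, \ldots, j_p)$ (simply relabel by reversing the order). By Proposition 2.2 applied to $\alpha$, the latter is equivalent to $\alpha^p = 0$. Hence $\alpha^p = 0$ if and only if $(\alpha^*)^p = 0$.

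To upgrade this from \emph{nilpotent of order at most $p$} to \emph{nilpotent of order exactly $p$}, I would apply the same equivalence with $p-1$ in place of $p$: $\alpha^{p-1} = 0$ iff $(\alpha^*)^{p-1} = 0$. Combining both equivalences, $\alpha$ satisfies $\alpha^p = 0$ and $\alpha^{p-1} \neq 0$ if and only if $\alpha^*$ satisfies the same, which is the claim. There is no real obstacle here; the only point that requires attention is the correct identification of the Choi-Kraus coefficients of $\alpha^*$, after which the argument is essentially a single symmetry observation combined with Proposition 2.2.
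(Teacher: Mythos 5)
Your argument is correct and is precisely the reasoning behind the paper's one-line proof, which simply cites Proposition 2.2: the Choi-Kraus coefficients of $\alpha^*$ are the adjoints $L_i^*$, and the vanishing of all products of length $p$ (respectively $p-1$) is symmetric under taking adjoints and reversing the order. You have merely written out explicitly what the paper leaves implicit.
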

\begin{proof}
This is clear from Proposition 2.2.
\end{proof}

Nilpotent type of a nilpotent linear map $L$ and that of its adjoint
$L^*$ are same. This is no longer the case for CP nilpotent types.
If we analyze this further, we get the following picture.

Once again let $\alpha $ be a nilpotent CP map of order $p$ on
$\mcl{B}(H).$ Decompose the Hilbert space $H$ as $H= H_1\oplus
H_2\oplus \cdots \oplus H_p$ as before and also consider
$$H=H^1\oplus H^2\oplus \cdots \oplus H^p$$
where
$$H^1=\mbox{ker}(\alpha ^*(1)), ~~
 H^k= \textrm{ker}((\alpha ^*) ^k(1))\bigcap (\textrm{ker}((\alpha
 ^*)
^{k-1}(1)))^{\perp} ~~\forall k\geq 2.$$ Further take,
$a^k=\textrm{dim}(H^k)$ for $1\leq k\leq p.$ Then we get our first
majorization type result.

\begin{thm} Suppose $\alpha , \alpha ^*$ have CP nilpotent types
$(a_1, a_2, \ldots , a_p)$, $(a^1, a^2, \ldots , a^p)$ as above.
Then
$$a_{p-i+1}+a_{p-i+2}+\cdots +a_p \leq a^1+a^2+\cdots +a^i.$$
for $1\leq i\leq p$ and $a_1+a_2+\cdots +a_p=a^1+a^2+\cdots +a^p.$
\end{thm}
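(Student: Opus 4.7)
The plan is to reinterpret both sides of the inequality as dimensions of a range space and a kernel space of appropriate self-adjoint positive operators, and then to exhibit a direct inclusion between those two subspaces. The equality $a_1+\cdots+a_p=a^1+\cdots+a^p$ is immediate, since both sums equal $\dim H$ by construction of the two orthogonal decompositions $H=H_1\oplus\cdots\oplus H_p=H^1\oplus\cdots\oplus H^p$.

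For the inequality, observe that $H_1\oplus\cdots\oplus H_{p-i}=\ker(\alpha^{p-i}(1))$ by the definition of the $H_k$, and since $\alpha^{p-i}(1)$ is self-adjoint positive its kernel and range are orthogonal complements. Hence
\[
a_{p-i+1}+\cdots+a_p \;=\; \dim\bigl(H_{p-i+1}\oplus\cdots\oplus H_p\bigr) \;=\; \dim\bigl(\mathrm{ran}(\alpha^{p-i}(1))\bigr),
\]
while from the definition of the $H^k$ we have $a^1+\cdots+a^i=\dim\bigl(\ker((\alpha^*)^i(1))\bigr)$. Thus the inequality will follow once we establish the containment
\[
\mathrm{ran}\bigl(\alpha^{p-i}(1)\bigr) \;\subseteq\; \ker\bigl((\alpha^*)^i(1)\bigr).
\]

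To prove this containment, I would pass to Choi-Kraus coefficients of the iterates. A direct expansion shows that $\alpha^{p-i}$ has Choi-Kraus coefficients $L_{i_1}L_{i_2}\cdots L_{i_{p-i}}$ indexed over tuples in $\{1,\ldots,d\}^{p-i}$, and that $(\alpha^*)^i$ has Choi-Kraus coefficients of the form $(L_{j_1}L_{j_2}\cdots L_{j_i})^*$ (up to relabeling of index tuples). Proposition 2.1(iv) then gives
\[
\mathrm{ran}(\alpha^{p-i}(1)) \;=\; \overline{\mathrm{span}}\bigl\{(L_{i_1}\cdots L_{i_{p-i}})^* u : u\in H\bigr\},
\]
and Proposition 2.1(i) gives
\[
\ker((\alpha^*)^i(1)) \;=\; \bigcap_{j_1,\ldots,j_i}\ker\bigl((L_{j_1}\cdots L_{j_i})^*\bigr).
\]
The containment therefore reduces to the identity $(L_{j_1}\cdots L_{j_i})^*(L_{i_1}\cdots L_{i_{p-i}})^*=0$ for all choices of indices, or equivalently, after taking adjoints, $L_{i_1}\cdots L_{i_{p-i}}L_{j_1}\cdots L_{j_i}=0$. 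This is a length-$p$ word in the $L_k$'s, hence vanishes by Proposition 2.2 applied to the order-$p$ nilpotent $\alpha$.

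I anticipate no serious obstacle: once both sides are recast as dimensions of range and kernel of iterates of $\alpha(1)$ and $\alpha^*(1)$, the proof is a bookkeeping check that words of length $p$ in the $L_k$ vanish. The only subtle point deserving attention is correctly identifying the Choi-Kraus coefficients of $(\alpha^*)^i$, since one must regroup an expression of the form $L_{j_1}\cdots L_{j_i}XL_{j_i}^*\cdots L_{j_1}^*$ into the form $M^*XM$ before appealing to Proposition 2.1.
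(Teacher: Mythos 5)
Your proposal is correct and takes essentially the same route as the paper: both reduce the inequality to the subspace inclusion $H_{p-i+1}\oplus\cdots\oplus H_p\subseteq H^1\oplus\cdots\oplus H^i$, i.e. that the length-$i$ adjoint words $(L_{j_1}\cdots L_{j_i})^*$ annihilate the orthocomplement of $\ker(\alpha^{p-i}(1))$, which follows from the vanishing of length-$p$ words (Proposition 2.2) together with Proposition 2.1. The paper expresses this through the block strictly upper triangular form of the Choi-Kraus coefficients, while you express it through Proposition 2.1(i),(iv) applied to the iterates $\alpha^{p-i}$ and $(\alpha^*)^i$; the underlying computation is the same.
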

\begin{proof} The equality part is obvious.
We consider a Choi-Kraus decomposition of $\alpha $ as before. Now
$L_k(H_j)\subseteq H_1\oplus H_2\oplus \cdots \oplus H_{j-1}$ for
all $j,k$. Therefore ran$(L_k) $ is contained in $H_1\oplus
H_2\oplus \cdots \oplus H_{p-1}.$ It follows that $L_k^*(H_p)=\{
0\}$ or $H_p\subseteq H^1$. Hence $a_p\leq a^1.$ In a similar way,
$$L_{k_1 }L_{k_2}\ldots L_{k_i}(H)\subseteq H_1\oplus H_2\oplus \cdots \oplus
H_{p-i}$$ for all $k_1, k_2, \ldots , k_i. $  Consequently
$L_{k_1}^*L_{k_2}^* \ldots L_{k_i}^* (H_{p-i+1}\oplus
H_{p-i+2}\oplus \cdots \oplus H_p)=\{0\}.$  That is,
$H_{p-i+1}\oplus H_{p-i+2}\oplus \cdots \oplus H_p\subseteq
H^1\oplus H^2\oplus \cdots \oplus H^i$. In particular,
$a_{p-i+1}+a_{p-i+2}+\cdots +a_p \leq a^1+a^2+\cdots +a^i.$


\end{proof}

The structure is particularly interesting if the projections onto
$H^ î$'s and $H_j$'s commute. In such a case, take $m_{ij}=$
dim$(H^i\bigcap H_j)$ for $1\leq i,j \leq p.$ Then $a_j =\sum _i
m_{ij},$ and $a^i= \sum _jm_{ij}.$ Moreover, as $H_{p-i+1}\oplus
H_{p-i+2}\oplus \cdots \oplus H_p\subseteq H^1\oplus H^2\oplus
\cdots \oplus H^i$, $m_{ij}=0$ for $i+j>(p+1).$ It is not clear as
to whether there is any such structure when the associated
projections do not commute.

For a nilpotent CP map $\alpha (X ) =\sum L_i^*XL_i$,  let the usual
nilpotent type be $(l_1, l_2, \ldots , l_p)$. In other words, $l_i=
$ dim $V_i/V_{i-1}$, where $V_i= \mbox{ker}(\alpha ^i)$ for $1\leq i
\leq p$ and $V_0=\{ 0\}.$ Then $l_1\geq l_2\geq \cdots \geq l_p) $
with $l_1+l_2+\cdots +l_p= (\mbox{dim}H)^2.$
 Fix $1\leq k\leq p.$ From Proposition
2.1,  for $x$ or $y$ in the kernel of $(\alpha ^* )^k (1)$, we have
$\alpha ^k (|x\rangle \langle y|)=0$.  Hence $\sum _{i=1}^kl_i\geq
(\sum _{i=1}^k(a^i))^2+2(\sum _{i=1}^ka ^i)(\sum _{i=k+1}^na ^i),$
where  $(a^1, a^2, \ldots , a^p)$, is the CP nilpotent type of the
adjoint map $\alpha ^* .$

\section{Majorization}

 Let $\alpha : \mcl{B}(H)\to \mcl{B}(H)$ be a non-zero completely positive
 map and let $M$ be a subspace of $H$. Take $N=M^{\perp} . $ Note that
 $\mcl{B}(M)$ is a subspace of $\mcl{B}(H)$ in the natural way by
 identifying $X\in \mcl{B}(M)$ with $P_MX|_M$ ($P_M$ being the orthogonal projection onto $M).$ In block operator notation,
 $X $ is identified with:
$$\begin{bmatrix}
                      X & 0 \\
                      0 & 0
              \end{bmatrix}$$

 We say that $M$ is invariant
 under $\alpha $, if $\alpha $ leaves $\mcl{B}(M)$ invariant.
 Observe that this happens if and only if every Choi-Kraus
 coefficient leaves $N:=M^{\perp} $ invariant. Suppose,
$\alpha $ is as in $(2.1).$ Then we get
$$L_i=\begin{bmatrix}
                      B_i & 0 \\
                      D_i & C_i
              \end{bmatrix}$$
for some operators $B_i\in \mcl{B}(M), C_i\in \mcl{B}(N), D_i\in
\mcl{B}(M, N)$, $1\leq i \leq d.$ Define two new completely positive
maps $\beta $ on $\mcl{B}(M)$ and $\gamma $ on $\mcl{B}(N)$ by
restrictions of $\alpha $:
$$\beta (X) =\sum B_i^*XB_i    ~~\forall X\in \mcl{B}(M); $$
$$\gamma (X) =\sum C_i^*XC_i    ~~\forall X\in \mcl{B}(N). $$
Note that for any $i_1, i_2, \ldots, i_p$,
$$L_{i_1}L_{i_2}\ldots L_{i_p}=
\begin{bmatrix}
                      B_{i_1}B_{i_2}\ldots B_{i_p}& 0 \\
                      D_{i_1,i_2, \ldots , i_p} & C_{i_1}C_{i_2}\ldots C_{i_p}
              \end{bmatrix} $$
for some operator  $D_{i_1,i_2, \ldots , i_p}$. Now from Proposition
2.2 it follows that $\alpha$ is nilpotent of order $p$ then $\beta $
and $\gamma $ are nilpotent of order at most $p.$

As in the case of types of nilpotent linear maps on vector spaces,
one wishes to understand the relationship between CP types of
$\alpha , \beta $ and $\gamma .$ Here is the main result we have in
this direction. Let $(a_1, a_2, \ldots , a_p)$, $(b_1, b_2, \ldots ,
b_p)$, $(c_1, c_2, \ldots , c_p)$ be CP types of $\alpha , \beta $
and $\gamma $ respectively.  As usual we take $b_i=0$, when $i$ is
bigger than the order of nilpotency of $\beta $ and similarly
$c_i=0$ for $i$ bigger than the order of nilpotency of $\gamma .$

\begin{thm} (Majorization) With notation as above,

$$\sum _{i=1}^ka_i\leq \sum _{i=1}^kb_i+\sum _{i=1}^kc_i$$
for all $1\leq k\leq p$, and
$$ \sum _{i=1}^p  a_i= \sum _{i=1}^pb_i+\sum _{i=1}^pc_i.$$
\end{thm}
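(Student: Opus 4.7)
My plan is to reformulate the target inequality as a statement about dimensions of kernels of single operators and then construct a natural linear map between those kernels whose kernel and image can be read off from the block--triangular structure.

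First, applying Proposition 2.1(i) to $\alpha^k$ and $\alpha^{k-1}$ yields $\ker(\alpha^{k-1}(1)) \subseteq \ker(\alpha^k(1))$, so the definition of CP nilpotent type gives
$$\sum_{i=1}^k a_i = \dim \ker(\alpha^k(1)), \qquad \sum_{i=1}^k b_i = \dim \ker(\beta^k(1_M)), \qquad \sum_{i=1}^k c_i = \dim \ker(\gamma^k(1_N)).$$
Writing $K_k$, $K_k^\beta$, $K_k^\gamma$ for these three kernels, the equality at $k=p$ becomes simply $\dim H = \dim M + \dim N$, and the inequality reduces to showing $\dim K_k \leq \dim K_k^\beta + \dim K_k^\gamma$.

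From the block expression for $L_{i_1}\cdots L_{i_k}$ already displayed just before the theorem statement, the $M$-component of $(L_{i_1}\cdots L_{i_k})x$ is $B_{i_1}\cdots B_{i_k}(P_M x)$, and for $x \in N$ the $N$-component is $C_{i_1}\cdots C_{i_k}x$. Proposition 2.1(i) applied to $\alpha^k$ (whose Choi--Kraus coefficients are these $k$-fold products) gives
$$K_k = \bigcap_{i_1,\ldots,i_k}\ker(L_{i_1}\cdots L_{i_k}),$$
and similarly for $K_k^\beta$ and $K_k^\gamma$ in terms of the $B$'s and $C$'s.

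The crucial step is to define $\phi : K_k \to M$ by $\phi(x) = P_M x$ and to verify that its image lies in $K_k^\beta$ with kernel equal to $K_k^\gamma$. For the image, reading off the $M$-component of the identity $L_{i_1}\cdots L_{i_k}x = 0$ gives $B_{i_1}\cdots B_{i_k}(P_M x) = 0$ for every tuple, so $P_M x \in K_k^\beta$. For the kernel, $\phi(x) = 0$ forces $x \in N$, and for $x \in N$ the identity $L_{i_1}\cdots L_{i_k}x = C_{i_1}\cdots C_{i_k}x$ shows that $K_k \cap N = K_k^\gamma$. Rank--nullity then yields
$$\dim K_k = \dim \ker\phi + \dim \mathrm{ran}(\phi) \leq \dim K_k^\gamma + \dim K_k^\beta,$$
which is the required inequality. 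I do not anticipate a serious obstacle: the whole argument is a rank--nullity count for the projection $K_k \to M$, with the block--triangular form of the Choi--Kraus coefficients supplying both containments.
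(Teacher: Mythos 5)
Your proof is correct and follows essentially the same route as the paper: both arguments identify $\sum_{i=1}^k a_i$ with $\dim\ker(\alpha^k(1))$ (and likewise for $\beta,\gamma$) and then compare these kernels through the block--triangular form of the products $L_{i_1}\cdots L_{i_k}$, which is exactly a dimension count for the compression to $M$. Your rank--nullity formulation for $P_M$ restricted to $\ker(\alpha^k(1))$, with range in $\ker(\beta^k(1))$ and kernel equal to $\ker(\gamma^k(1))$, is simply a cleaner, uniform-in-$k$ packaging of the basis-extension and linear-independence argument that the paper writes out for $k=1$ and then extends ``in a similar way'' to general $k$.
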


\begin{proof}
The equality statement follows as dim$(H)= \sum _{i=1}^pa_i$,
dim$(M)= \sum _{i=1}^pb_i$, dim$(N)= \sum _{i=1}^pc_i.$

For the inequality part, first consider the case $k=1$. Let $\{ u_1,
u_2, \ldots , u_r\}$ be a basis for $(\bigcap _i\mbox{ker}
B_i)\bigcap (\bigcap _i \mbox{ker}D_i)$. Similarly, let $\{ v_1,
v_2, \ldots , v_{c_1}\}$ be a basis for $\bigcap _i \mbox{ker}C_i.$
Then it is clear that
$$
\{ \begin{pmatrix} u_1\\0\end{pmatrix},
     \begin{pmatrix} u_2\\0\end{pmatrix},
     \ldots,
     \begin{pmatrix} u_r\\0\end{pmatrix},
     \begin{pmatrix} 0\\v_1\end{pmatrix},
     \begin{pmatrix} 0\\v_2\end{pmatrix},
     \ldots,
     \begin{pmatrix} 0\\v_{c_1}\end{pmatrix} \}$$
is linearly independent in $\bigcap _i \mbox{ker} L_i .$ Extend this
collection to:
$$
\{ \begin{pmatrix} u_1\\0\end{pmatrix},
     \begin{pmatrix} u_2\\0\end{pmatrix},
     \ldots,
     \begin{pmatrix} u_r\\0\end{pmatrix},
     \begin{pmatrix} 0\\v_1\end{pmatrix},
     \begin{pmatrix} 0\\v_2\end{pmatrix},
     \ldots,
     \begin{pmatrix} 0\\v_{c_1}\end{pmatrix},
 \begin{pmatrix} x_1\\y_1\end{pmatrix},
     \begin{pmatrix} x_2\\y_2\end{pmatrix},
     \ldots,
     \begin{pmatrix} x_s\\y_s\end{pmatrix}
     \}$$
 a basis of $\bigcap _i \mbox{ker} L_i .$ In particular, $a_1=
 r+c_1+s.$ Now we observe that $x_1, x_2, \ldots , x_s$ are vectors
 in $\bigcap _i \mbox{ker} B_i $ and we claim that $\{ u_1, u_2,
 \ldots , u_r, x_1, x_2, \ldots , x_s \}$ are linearly independent
 in  $\bigcap _i \mbox{ker} B_i .$ Once we prove this claim, we have
 $r+s\leq b_1$ and hence $a_1\leq b_1+c_1.$

 Proof of the claim is simple linear algebra as follows. Suppose
 $\sum _jp_ju_j+\sum _jq_jx_j=0$ for some scalars $p_j, q_j$.
Fix $1\leq i\leq d.$ Then as $u_j \in \bigcap _i \mbox{ker} D_i$ for
all $j$, $\sum
 _jq_jx_j \in   \mbox{ker} D_i $. Further as $\begin{pmatrix}
 x_j\\y_j\end{pmatrix}$ is in $\mbox{ker}L_i,$ $D_ix_j+C_iy_j=0$
or $C_iy_j= -D_ix_j$ for all $j$.  Consequently $\sum _j  C_iq_jy_j=
-\sum _jD_iq_jx_j=0.$ Therefore, $\sum _jq_jy_j\in \bigcap _i
\mbox{ker} C_i$, and so there exist scalars $t_i, 1\leq i\leq c_1$,
such that $\sum _it_iv_i= -\sum _jq_jy_j.$ Then, $\sum _jp_j
\begin{pmatrix} u_j\\0\end{pmatrix} + \sum _jt_j  \begin{pmatrix} 0\\v_j\end{pmatrix}
+\sum _jq_j \begin{pmatrix} x_j\\y_j\end{pmatrix} =0.$ Now due to
linear independence of these vectors, $p_j\equiv 0, q_j\equiv0$ and
$t_j\equiv 0 .$ In a similar way, by considering $\alpha ^k(1),
\beta ^k (1) $ and $\gamma ^k(1)$, $\sum _{j=1}^k a_j \leq \sum
_{j=1}^k b_j +\sum _{j=1}^kc_j $ for $1\leq k\leq p$ as $\sum
_{j=1}^ka_j= $dim $\mbox{ker} (\alpha ^k(1))$.

\end{proof}

It is to be noted that majorization here is quite different from the
standard  majorization theory (See \cite{Bhatia}), where given two
vectors, $x, y$ in $\mathbb{ R} ^n$, with $\sum _ix_i=\sum _iy_i$,
one says $y $ is majorized by $x$, if $\sum _{i=1}^k y^{\downarrow}
_i\leq \sum _{i=1}^kx^{\downarrow} _i$, for $1\leq k\leq n$, where
$`\downarrow '$ indicates that vectors are arranged in decreasing
order before comparing. In the results here, no rearrangement of
vectors are being done. In this context, it is interesting to
identify the extreme points of the convex, compact set of
non-negative vectors majorized (without ordering) by the given
vector. We get the following result, which we state without proof.
The proof is straightforward.

Let $x=(x_1, x_2, \ldots , x_n)$ be a vector in $\mathbb {R}^n$ with
$x_i>0$ for all $i.$ Consider $C(x):= $ $$\{ y: y\in \mathbb{R}^n,
y_i\geq 0 ~~\forall i, \sum _{i=1}^ky_i\leq \sum _{i=1}^kx_i
~~\mbox{for}~~~1\leq k\leq n, \sum _{i=1}^ny_i=\sum _{i=1}^nx_i\} .
$$ Then the set of extreme points of $C(x)$ is given by
$E(x):= $
 $$\{ y: y\in \mathbb{R}^n,
y_i\geq 0 ~~ \forall i, y_i= 0~\mbox{ or}~ \sum _{i=1}^ky_i= \sum
_{i=1}^kx_i ~~\mbox{for}~~~1\leq k\leq (n-1), \sum _{i=1}^ny_i=\sum
_{i=1}^nx_i\} .
$$
In particular, $E(x)$ has $2^{(n-1)}$ extreme points.

\section{Roots of states}

 We consider the following notion based on \cite{Bh}.

\begin{defn} Let $H$ be a finite dimensional Hilbert space and
let $u\in H$ be a unit vector in $H$. Consider the pure state
$X\mapsto \langle u, Xu\rangle I$ (our inner products are
anti-linear in the first variable) on $\mathcal {B}(H).$ Then a
unital completely positive map $\tau : \mathcal {B}(H)\to \mathcal
{B}(H)$ is said to be an $n$th root of this state if $$\tau ^n(X)=
\langle u , Xu\rangle I ~~~\forall X\in \mathcal {B}(H).$$
\end{defn}

Various examples of roots of states and also continuous versions of
this and its relevance to dilation theory can be found in \cite{Bh}.
Here we restrict ourselves connecting the notion to nilpotent CP
maps in the discrete finite dimensional situation. The idea is
simple and should be clear from the following Theorem.

 \begin{thm}
  Let $\tau: \mcl{B}(H)\to \mcl{B}(H)$ be a unital CP- map such that $\tau^p(X)=\langle u,Xu\rangle I$ where u is a unit vector of H.
  Set $H_0 =\{x\in H:\langle x,u\rangle=0\}$ so that $H=\mathbb{C}u \oplus H_0$.
  Suppose $\alpha:\mcl{B}(H_0)\to\mcl{B}(H_0)$ is the compression of $\tau $ to $B(H_0)$, then
$\alpha $ is nilpotent CP map of order at most $p$.
\end{thm}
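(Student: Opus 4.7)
The plan is to show that the embedded copy $\widetilde{\mcl{B}(H_0)} \subset \mcl{B}(H)$ is invariant under $\tau$, with $\tau$ restricting there to exactly $\alpha$; nilpotency of $\alpha$ then falls out of the hypothesis on $\tau$. The key input is that $u$ is a common eigenvector of every Choi-Kraus coefficient of $\tau$.

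First, I would establish that the vector state $\omega(X) := \langle u, Xu\rangle$ is $\tau$-invariant. Using unitality, $\tau^{p+1}(X) = \tau(\omega(X)\, I) = \omega(X)\, I = \tau^p(X)$, while also $\tau^{p+1}(X) = \tau^p(\tau(X)) = \omega(\tau(X))\, I$; comparing these forces $\omega \circ \tau = \omega$. By trace duality this rewrites as $\tau^*(|u\rangle\langle u|) = |u\rangle\langle u|$. Fixing a Choi-Kraus form $\tau(X) = \sum_{i=1}^d M_i^* X M_i$, the identity becomes $\sum_i |M_iu\rangle\langle M_iu| = |u\rangle\langle u|$; since each summand is positive and dominated by the rank-one operator on the right, every $M_iu$ must lie in $\mathbb{C}u$, so $M_iu = \lambda_iu$ for some $\lambda_i\in\mathbb{C}$.

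In the block decomposition $H = \mathbb{C}u \oplus H_0$ this says each $M_i$ is upper-triangular: $M_i = \begin{bmatrix} \lambda_i & b_i \\ 0 & d_i \end{bmatrix}$. For $X \in \mcl{B}(H_0)$, embedded as $\tilde X = \begin{bmatrix} 0 & 0 \\ 0 & X \end{bmatrix}$, a direct computation gives $M_i^* \tilde X M_i = \begin{bmatrix} 0 & 0 \\ 0 & d_i^* X d_i \end{bmatrix}$, whence $\tau(\tilde X) = \widetilde{\alpha(X)}$ with $\alpha(X) = \sum_i d_i^* X d_i$. So $\widetilde{\mcl{B}(H_0)}$ is $\tau$-invariant and $\tau$ restricts there to $\alpha$. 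Iterating, $\tau^k(\tilde X) = \widetilde{\alpha^k(X)}$ for every $k \geq 1$; at $k = p$, using $\tilde Xu = 0$, we obtain $\widetilde{\alpha^p(X)} = \tau^p(\tilde X) = \langle u, \tilde Xu\rangle I = 0$, and hence $\alpha^p \equiv 0$ on $\mcl{B}(H_0)$.

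The main obstacle is the first step: recognising that the hypothesis together with unitality promotes $|u\rangle\langle u|$ to a fixed point of $\tau^*$, which then forces $u$ to be a common eigenvector of every Choi-Kraus coefficient of $\tau$. Once the upper-triangular structure is in hand, the rest is routine $2\times 2$ block bookkeeping.
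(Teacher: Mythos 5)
Your proof is correct, but it reaches the key fact --- invariance of the corner $\mcl{B}(H_0)$ under $\tau$ --- by a different route than the paper. The paper argues with operator inequalities: from $\langle u,\tau^m(|u\rangle\langle u|)u\rangle=1$ and contractivity it gets $\tau^m(|u\rangle\langle u|)\geq |u\rangle\langle u|$, hence $\tau^m(I-|u\rangle\langle u|)\leq I-|u\rangle\langle u|$, and then complete positivity gives $\tau^m\begin{pmatrix}0&0\\0&X\end{pmatrix}=\begin{pmatrix}0&0\\0&\alpha^m(X)\end{pmatrix}$, with no Choi--Kraus decomposition ever chosen. You instead show the vector state is $\tau$-invariant (your $\tau^{p+1}$ computation is a clean way to get this), dualize to $\tau^*(|u\rangle\langle u|)=|u\rangle\langle u|$, and conclude that $u$ is a common eigenvector of the Kraus coefficients, so each $M_i$ is block upper triangular with respect to $\mathbb{C}u\oplus H_0$; the invariance and the identity $\tau^k(\wtilde{X})=\wtilde{\alpha^k(X)}$ then follow by block multiplication, and the final step ($\tau^p(\wtilde{X})=\langle u,\wtilde{X}u\rangle I=0$, so $\alpha^p=0$) is the same as the paper's. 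Your version buys an explicit Choi--Kraus form $\alpha(X)=\sum_i d_i^*Xd_i$ for the compression and matches the criterion used in Section 3 (invariance of $\mcl{B}(M)$ under a CP map corresponds to the Kraus coefficients leaving $M^{\perp}=\mathbb{C}u$ invariant), whereas the paper's coefficient-free argument is slightly more economical and transfers more directly to settings where one prefers not to fix a Kraus decomposition. One small point of care: the step from $\sum_i|M_iu\rangle\langle M_iu|=|u\rangle\langle u|$ to $M_iu\in\mathbb{C}u$ deserves the one-line justification you implicitly use (testing against vectors orthogonal to $u$), but it is sound.
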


 \begin{proof} For $m\geq p$, $\tau ^m(X)= \tau ^{m-p}(\tau ^p(X))= \tau ^{m-p}(\langle u , Xu\rangle I)
= \langle u , Xu\rangle \tau ^{m-p} (I)= \langle u, Xu\rangle I.$
Now for $0\leq m \leq p ,$
\begin{eqnarray*} 1&=& \langle u, \tau ^{m+p}(|u\rangle \langle u|)u\rangle
= \langle u, \tau  ^{p}(\tau ^m(|u\rangle \langle u|))u\rangle \\
&=& \langle u, \{ \langle u, \tau ^m(|u\rangle \langle u|)u\rangle
I\}u\rangle =\langle u, \tau ^m(|u\rangle \langle u|)u\rangle .
\end{eqnarray*}
Hence $\langle u, \tau ^m( |u\rangle \langle u|)u\rangle=1$ for all
$m$. As $\tau $ is contractive, it follows that $\tau ^m (|u\rangle
\langle u|)\geq |u\rangle \langle u|$ for all $m.$ Consequently
$\tau ^m(I-|u\rangle \langle u|)\leq (I-|u\rangle \langle u|)$ for
all $m$. Now complete positivity of $\tau $ yields,
  $\tau ^m \begin{pmatrix}
             0&0\\
             0&X\\
      \end{pmatrix}=
      \begin{pmatrix}
             0&0\\
      0&\alpha ^m(X)\\
  \end{pmatrix}$ for all $X\in B(H_0).$ But then,  $\alpha^p(X)=0$ for all
  $X\in\mcl{B}(H_0)$,
  as  $$               \begin{pmatrix}
                     0&0\\
                     0&\alpha ^p(X)
              \end{pmatrix} =\tau ^p\begin{pmatrix}
                    0&0\\
                    0&X
              \end{pmatrix}
 =\left\langle \begin{pmatrix}
                                            u\\
                                            0
                                     \end{pmatrix},
                                     \begin{pmatrix}
                                            0&0\\
                                            0&X
                                     \end{pmatrix}
                                     \begin{pmatrix}
                                           u\\
                                           0
                                     \end{pmatrix}
                          \right\rangle I =0.$$

 \end{proof}

We have the following result in the converse direction. Given a
nilpotent CP map we construct a root of state in a slightly larger
space.

 \begin{thm}
  Let $\alpha:\mcl{B}(H_0)\to\mcl{B}(H_0)$ be a  contractive CP-map such that $\alpha^p(Y)=0$ for all $Y\in\mcl{B}(H_0)$. Take $H=\mathbb{C}\oplus H_0$ and
  $u=\begin{pmatrix}
            1\\
            0
  \end{pmatrix}$.
  Suppose $\tau:\mcl{B}(H)\to\mcl{B}(H)$ is a map defined by
  \begin{align*}
       \tau(\begin{pmatrix}
                  X_{11}&X_{12}\\
                  X_{21}&X_{22}
                 \end{pmatrix})
            =\begin{pmatrix}
                    X_{11}&0\\
                   0&\alpha(X_{22})+X_{11}(I-\alpha(I))
             \end{pmatrix}\\
  \end{align*}
  for all $X=[X_{ij}]\in\mcl{B}(\mathbb{C}\oplus H_0)$. Then $\tau$ is a CP-map and $\tau^p(X)=\langle u,Xu\rangle I$ for all $X\in\mcl{B}(H)$.
 \end{thm}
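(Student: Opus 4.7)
The plan is to split $\tau$ into a sum of three manifestly completely positive pieces, and then to prove by induction an explicit closed form for $\tau^m$ which collapses into the desired pure-state expression at $m=p$.

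First I would decompose $\tau = \tau_1+\tau_2+\tau_3$, where
$$\tau_1(X) = \begin{pmatrix} X_{11} & 0 \\ 0 & 0 \end{pmatrix},\quad \tau_2(X) = \begin{pmatrix} 0 & 0 \\ 0 & \alpha(X_{22})\end{pmatrix},\quad \tau_3(X) = \begin{pmatrix} 0 & 0 \\ 0 & X_{11}(I-\alpha(I))\end{pmatrix}.$$
The map $\tau_1$ is the compression $X\mapsto PXP$ with $P=|u\rangle\langle u|$, hence CP; $\tau_2$ is the composition of the compression $X\mapsto X_{22}$, the CP map $\alpha$, and the embedding into the $(2,2)$-block, hence CP. For $\tau_3$ the crucial input is contractivity of $\alpha$: since $\alpha(I)\leq I$, the operator $T:=I-\alpha(I)$ is positive in $\mcl{B}(H_0)$, so the scalar-to-operator map $z\mapsto zT$ from $\mathbb{C}$ to $\mcl{B}(H_0)$ is CP, and composing with the (CP) scalar compression $X\mapsto X_{11}$ and the embedding into the $(2,2)$-block shows $\tau_3$ is CP. Thus $\tau$ is CP, and a direct check on $X=I$ gives $\tau(I)=I$ (the $(2,2)$-entry is $\alpha(I)+1\cdot(I-\alpha(I))=I$), so $\tau$ is unital.

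Next I would establish by induction on $m\geq 1$ that
\begin{equation*}
\tau^m(X) = \begin{pmatrix} X_{11} & 0 \\ 0 & \alpha^m(X_{22}) + X_{11}(I-\alpha^m(I))\end{pmatrix}.
\end{equation*}
The base case $m=1$ is the definition. For the inductive step, writing $Y=\tau^m(X)$ one has $Y_{11}=X_{11}$ and $Y_{22}=\alpha^m(X_{22})+X_{11}(I-\alpha^m(I))$; applying $\tau$ produces a $(2,2)$-entry
$$\alpha(Y_{22})+Y_{11}(I-\alpha(I)) = \alpha^{m+1}(X_{22}) + X_{11}(\alpha(I)-\alpha^{m+1}(I)) + X_{11}(I-\alpha(I)) = \alpha^{m+1}(X_{22}) + X_{11}(I-\alpha^{m+1}(I)),$$
closing the induction. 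Setting $m=p$ and using $\alpha^p=0$ (whence $\alpha^p(I)=0$ as well) gives
$$\tau^p(X) = \begin{pmatrix} X_{11} & 0 \\ 0 & X_{11}I\end{pmatrix} = X_{11}\cdot I_H = \langle u, Xu\rangle\, I,$$
since $X_{11}=\langle u, Xu\rangle$ by definition of $u$.

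The only nontrivial step is verifying complete positivity of $\tau_3$; everything else is bookkeeping and a one-line telescoping induction. The contractivity hypothesis enters in exactly one place (to make $I-\alpha(I)\geq 0$), and the nilpotency hypothesis enters in exactly one place (to kill both $\alpha^p(X_{22})$ and $\alpha^p(I)$ at the final step).
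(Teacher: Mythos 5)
Your proof is correct and follows essentially the same route as the paper: positivity of $I-\alpha(I)$ from contractivity gives complete positivity of $\tau$, and the same closed form for $\tau^k(X)$ is established by induction and evaluated at $k=p$. Your explicit decomposition of $\tau$ into three CP summands simply fills in the detail the paper dismisses with ``clearly,'' so there is nothing to correct.
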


 \begin{proof} Clearly as $\alpha $ is contractive, $I-\alpha (I)$
 is positive and hence $\tau $ is completely positive.  For $X=[X_{ij}]\in\mcl{B}(\mathbb{C}\oplus H_0)$,
  through mathematical induction, it is easily seen  that
  \begin{align*}
      \tau ^k(X)&=\begin{pmatrix}
                      X_{11}&0\\
                      0&\alpha ^k(X_{22})+X_{11}(I-\alpha ^k(I))
              \end{pmatrix}.
  \end{align*}
  In particular, as $\alpha ^p(X_{22})= \alpha ^p (I)=0$, $\tau
  ^p(X)= X _{11}I= \langle u, Xu\rangle I.$

 \end{proof}

\noindent {\bf Acknowledgements:} The second author thanks the
National Board for Higher Mathematics (NBHM), India for financial
support.

\noindent {\sc Statistics and Mathematics Unit,\\
 Indian Statistical Institute,\\
R V
College Post, Bangalore 560059, India. } \\
\texttt{bhat@isibang.ac.in } and \texttt{ nirupama@isibang.ac.in } \\


\begin{thebibliography}{10}


\bibitem{Ap}\textsc {Appleby, Glenn D.} A simple approach to matrix realizations for
Littlewood-Richardson sequences, Linear Algebra Appl. 291 (1999),
no. 1-3, 1–14. MR1685641 (2000b:15020).

\bibitem{Bh} \textsc{Bhat, B. V. Rajarama},  Roots of states. Commun. Stoch.
Anal. 6 (2012), no. 1, 85–93. MR2890852


\bibitem{Bhatia}  \textsc{Bhatia, Rajendra,} Positive definite matrices.
Princeton Series in Applied Mathematics. Princeton University Press,
Princeton, NJ, 2007. x+254 pp. ISBN: 978-0-691-12918-1;
0-691-12918-5. MR2284176.



\bibitem{BLS} \textsc {Bercovici, Hari; Li, Wing Suet; Smotzer, Thomas}  Continuous
versions of the Littlewood-Richardson rule, selfadjoint operators,
and invariant subspaces, J. Operator Theory 54 (2005), no. 1, 69–92.
MR2168859 (2006h:47029.

\bibitem{BO}   \textsc{Brown, Nathanial P.; Ozawa, Narutaka}
C*-algebras and finite-dimensional approximations. Graduate Studies
in Mathematics, 88. American Mathematical Society, Providence, RI,
2008. xvi+509 pp. ISBN: 978-0-8218-4381-9; 0-8218-4381-8. MR2391387.




\bibitem{CF} \textsc  {Compta, Albert; Ferrer, Josep} Matricial realizations of the
solutions of the Carlson problem, Linear Algebra Appl. 353 (2002),
197–206. MR1919637 (2003k:15010).


  \bibitem{Ch} \textsc{Choi, Man Duen,} Completely positive linear
maps on complex matrices. Linear Algebra and Appl. 10 (1975),
285–290. MR0376726.



\bibitem{Ful} \textsc {Fulton, William } Eigenvalues, invariant factors, highest
weights, and Schubert calculus,  Bull. Amer. Math. Soc. (N.S.) 37
(2000), no. 3, 209–249 MR1754641 (2001g:15023).




\bibitem{Hol}
\textsc{Holevo, Alexander S.} Quantum systems, channels,
information. A mathematical introduction. De Gruyter Studies in
Mathematical Physics, 16. De Gruyter, Berlin, 2012. xiv+349 pp.
MR2986302.



 \bibitem{Kr} \textsc{Kraus, K.} Operations and effects in the
Hilbert space formulation of quantum theory. Foundations of quantum
mechanics and ordered linear spaces (Advanced Study Inst., Marburg,
1973), pp. 206–229. Lecture Notes in Phys., Vol. 29, Springer,
Berlin, 1974. MR0496049.

\bibitem{LM} \textsc{Li, Wing Suet; M\"{u}ller, Vladimír,}  Invariant subspaces of nilpotent operators
and LR-sequences. Integral Equations Operator Theory 34 (1999), no.
2, 197–226. MR1694708


\bibitem{Par} \textsc{Parthasarathy, K. R.} An introduction to quantum stochastic
calculus. [2012 reprint of the 1992 original], Modern Birkhäuser
Classics. Birkhäuser/Springer Basel AG, Basel, 1992. xii+290 pp.
MR3012668.


\bibitem{Paulsen}  \textsc{Paulsen, Vern,}  Completely bounded maps and
operator algebras. Cambridge Studies in Advanced Mathematics, 78.
Cambridge University Press, Cambridge, 2002. xii+300 pp. MR1976867.


\bibitem{wfs} \textsc{W. F. Stinespring}, \emph{Positive functions on $C\sp *$-algebras}, Proc. Amer. Math. Soc. 6 (1955), 211--216.

















\end{thebibliography}
\end{document}